\newtheorem{lemma}{Lemma}[section]
\newtheorem{theorem}[lemma]{Theorem}
\newtheorem{corollary}[lemma]{Corollary}
\newtheorem{proposition}[lemma]{Proposition}
\theoremstyle{definition}
\newtheorem{definition}{Definition}[section]
\numberwithin{equation}{section}
\theoremstyle{remark}
\newtheorem{Rem}{Remark}[section]
\def\blfootnote{\gdef\@thefnmark{}\@footnotetext}
\newcommand\R{\mathbb R}
\newcommand\C{\mathbb C}
\newcommand\del{\partial}
\newcommand\Lam{\mathscr F}
\newcommand\nnfootnote[1]{%
  \begin{NoHyper}
  \renewcommand\thefootnote{}\footnote{#1}%
  \addtocounter{footnote}{-1}%
  \end{NoHyper}}
\title{Uniformization of compact foliated spaces by surfaces of hyperbolic type via the Ricci Flow}
\dedicatory{Dedicated to the memory of Marco Brunella.}
\author{Richard Mu\~{n}iz Manasliski\\
        Alberto Verjovsky}
\address{{Centro de Matem\'atica, Facultad de Ciencias},
Igu\'a 4225 esq. Mataojo C.P. 11400, Montevideo, Uruguay}
\email{rmuniz@cmat.edu.uy}
\address{{Instituto de Matem\'aticas, Universidad Nacional Aut\'onma de M\'exico},
Ciudad Universitaria, Delegaci\'on Coyoac\'an, Ciudad de M\'exico. M\'exico}
\email{alberto@matcuer.unam.mx}
\begin{document}

\begin{abstract} We give a new proof of the uniformization theorem of the leaves of a comact lamination by surfaces of hyperbolic conformal type. 
We use a laminated version of the Ricci flow to prove the existence of a laminated Riemannian metric (smooth on the leaves, transversally continuous)
with leaves of constant Gaussian curvature equal to -1, which is conformally equivalent to the original metric.
\end{abstract}

\nnfootnote{\textup{2010} \textit{Mathematics Subject Classification}: 53C12, 53C44}

\maketitle
\section{Introduction}

One version of the Uniformization theorem asserts that any orientable smooth surface $S$ with a Riemmanian metric $g$ admits a \emph{unique} metric $\hat{g}$ 
conformally equivalent to $g$ such that the Gaussian curvature is a constant function on $S$. It implies that given any Riemann surface $\Sigma$ its 
universal covering as Riemann surface obeys the trichotomy: it is either the complex plane $\C$, the Riemann sphere
 $\overline\C$ or the unit disc $\mathbb D$ in the complex plane. In particular every simply connected Riemann surface is isomorphic to the plane, the open unit 
 disc, or the sphere.
 This theorem has been hailed by Lars Ahlfors as
\emph{``perhaps the single most important theorem in the whole theory of functions of one variable;
it does for Riemann surfaces what the Riemann Mapping theorem does for plane regions''}  ~\cite{Ah1} 
(see \cite{Gray} for the history of the Riemann mapping theorem and \cite{SG} for the history of the Uniformization theorem).
This theorem, attributed to Klein, Koebe and Poincar\'e, was proved rigorously and almost simultaneously in 1907 by Paul Koebe \cite{Ko} and Henri Poincar\'e 
\cite{Poinc}. It has been crowned by its important, useful and beautiful generalizations, for instance the theory of quasiconformal mappings, Teichm\"uller 
theory, the Measurable Riemann Mapping theorem and the study of conformal invariants just to cite a few. It has many important applications to other branches 
of mathematics like hyperbolic geometry,  Kleinian groups and complex dynamical systems \cite{Ah3, Ah4, Mi, LM, LMM, BNS, Su}. 

On the other hand, the theory
of foliations and laminations by Riemann surfaces has also a wide spectrum of application, for instance essential laminations and
tight foliations on 3-manifolds have played an important role in the study of these manifolds \cite{GO, Ca1, Ca2} and also in dynamical systems and ergodic 
theory \cite{BGMM, ADMV, LM,Su}. Here we would like to mention the remarkable work by Marco Brunella on holomorphic foliations and their uniformizations 
\cite{Bru, Br1, Br2, Br3, Br4, Br5, Br6, Br7, Br8}. Remember that a lamination is a topological space which is locally a product of a disk
 and a metric space, see definition 2.1. A function defined on a lamination is said to be regular or $C^{\infty}-$laminated (we also denote by $C^{\infty,0}$ the set of this functions) if it is $C^{\infty}$
 restricted to each leaf and its derivatives of all orders (including 0-order) in the leaf direction are continuous in the ambient space. Similarly, a 
 laminated Riemannian metric is a collection of Riemannian metrics, one in each leaf, such that all its leafwise derivatives are continuous 
 on the lamination.

Since compact laminations by surfaces are a generalizations of (a continuous family of) surfaces 
it is natural to ask for a \emph{laminated version} of the uniformization theorem: {\bf Given a compact lamination 
$(M,\Lam)$ by surfaces with a laminated Riemannian 
metric $g$ find a laminated conformal metric $\hat{g}$ which renders each leaf of constant curvature}. 

Each leaf of a lamination has a well defined conformal type, independently of 
the laminated metric we put. That is to say, for any Riemannian metric on the tangent bundle to the leaves which varies continuously in the 
lamination, each leaf will have the same conformal character (elliptic, hyperbolic or parabolic). We can easily construct examples where 
hyperbolic and parabolic leaves are mixed in a non trivial way. Of course our question only makes sense if the lamination is by leaves 
of the same conformal type since the leaves' curvature is a continuous function. Certainly there always exists a metric in each leaf having 
constant curvature, but it is not necessarily regular as laminated metric. In the case where all the leaves are elliptic Reeb's stability 
theorem implies the triviality of the lamination, i.e. the lamination is a continuous family of spheres, and using Ahlfors-Bers theory 
it follows that the uniformization (and all its tangential derivatives) depends continuously on the parameters. For laminations by parabolic 
surfaces the answer is negative in general, and the most we can do is to aproximate the uniformization as was done in \cite{Ghys}. In 
\cite{Candel} the regularity of the uniformization in this case is proved under certain topological restriction. When the lamination 
is of hyperbolic type the answer is positive, as was proved by Alberto Candel in \cite{Candel}. In this last case the proof is not simple 
and relies on some analytical techniques carefully applied to a foliation. The introduction of the uniformization metric and the proof 
of its lower semicontinuity was acheived by the second author in \cite{Alberto}, then Candel proved in \cite{Candel} that it is also 
upper semicontinuous and that all its leafwise derivatives are continuous as well. The aim of the present work is to give a new 
proof of this result from a more geometric viewpoint.

As it is widely known the uniformization theorem for compact surfaces can be proved using the celebrated Hamilton's Ricci flow. This is 
acheived for any compact Riemannian surface in a series of papers by Richard Hamilton \cite{Ham2}, Bennet Chow \cite{Chow} and  
Xiuxiong Chen, Peng Lu and Gang Tian \cite{ChenLuTian}. For noncompact surfaces it is an open question when the uniformization can 
be obtained in this way. There are some works in this direction, see for example \cite{Isen-Mazz-Sesum,Albin-Aldana-Rochon} and 
references therein, assuming somewhat restrictive
conditions on the metric. Inspired by this circle of ideas, it sounds natural to try to obtain the uniformization theorem for compact laminations via 
the Ricci flow. Even if the leaves are not necessarily compact, and for non compact leaves the metric will not satisfy the condition for long time existence 
and convergence of the solutions to  the Ricci flow, this difficulty can be overcome thanks to the compactness of the total space where the lamination lives.

The use of geometric flows for the study of foliations on smooth manifolds is not new, and there are many works in this sense by 
Vladimir Rovenski and Pawel Walczac  \cite{RovWal,RovWal1,RovWal2,Rovenski1,Rovenski2} for example. 
But in all the works we know it is the ``transversal'' (or extrinsic) Ricci flow which is considered and mostly in codimension one foliations; the metric on the leaves is fixed 
and what changes with the flow is the transversal metric. In the present paper what we use is the ``tangential'' (or intrinsic) Ricci flow for rank 2 foliations with no 
restriction on the transversal; what changes with the flow here is the metric on each leaf. More concretely we consider the following Cauchy 
problem
\begin{equation}
\left\{\begin{aligned} \frac{\partial g(t)}{\partial t} &=(r-R(t))g(t),\\
                       g(0)&=g_0
       \end{aligned}\right.
\end{equation}
which is a kind of ``normalized'' Ricci flow equation on each leaf, where $g_0$ is a given laminated metric, $R(t)$ is  the function that assigns to each point $x$ the 
curvature of the leaf that passes through $x$ with respect to the metric $g(t)$, and $r$ is a constant to be chosen conveniently. (In the case of compact surfaces this constant $r$ is usually taking to be the average of the scalar curvature). Note the 
important fact that the conformal class is invariant under this flow.

General theorems by Wan-Xiong Shi \cite{Shi}, and Bing-Long Chen and Xi-Ping Zhu \cite{Chen-Zhu}, guarantee  short time existence and uniqueness
 of solution to the Ricci flow on a noncompact manifold for a complete initial metric of bounded curvature. Note that for a metric on a compact 
 lamination each leaf is necessarily complete and the curvature of the leaves is a continuous function and therefore bounded. The first problem is to show that all the solutions obtained by 
 running the Ricci flow on each leaf patch together to give a one parameter family of laminated metrics, which proves short time existence 
  for 1.1 as a laminated problem. The second step is to show long time existence of solutions, for what we need some {\it a priori} bounds on
  the curvature and its 
  derivatives. To obtain these bounds from the maximum principle we have to start the flow with an initial condition having negative curvature 
  at all points; metrics of this type can always be chosen within the conformal class of any laminated metric, 
  provided the lamination is of hyperbolic type, thanks to a simple argument attributed to Éthienne Ghys. 
  Finally, the {\it a priori} bounds give convergence of the solution $g(t)$ when $t\to+\infty$ to a metric in each leaf, and again we have to 
  prove laminated regularity of this collection of leafwise metrics which is obtained in the limit.

To finish this introductory section we now briefly describe the content of the paper.
In Section \ref{Ricci_Flow} we introduce the laminated Ricci flow and develop the  
{\it a priori} curvature estimates with respect to the flow. In Section \ref{covering_tubes} we prove the existence of covering tubes, which 
are a technical device introduced by Ilyashenko \cite{Ily1,Ily2} and also used by Brunella \cite{Bru}, that allows us to 
study the flow as a parametrized family of flows on surfaces. Then in Section
\ref{existence} uniqueness and continuity of the solution to the Ricci flow is proved. Finally, in \ref{uniformization} we 
recast the uniformization theorem in the spirit of Richard Hamilton.
The proof relies on the existence of covering tubes and uniqueness of solutions for the Ricci flow on manifolds. It is proved that a covering tube of 
the form $\Phi:\Sigma\times{D}\to{M}$ , where $\Sigma$ is a transversal and $D\subset\C$ the unit disk,  always exists. Taking the pull-back $\Phi^*g$ of the laminated 
metric we solve the problem in the tube to prove laminated regularity. 


\section{Preliminaries: laminations and laminated metrics.}

\begin{definition} A {\em lamination of rank m}, $(M,\Lam)$, is a second countable, locally compact, metrizable topological space $M$ endowed with an atlas $(U_\alpha, \varphi_\alpha)$ 
such that:
\begin{itemize}
 \item[(1)] Each $\varphi_\alpha$ is a homeomorphism from $U_\alpha$ to a product $D_\alpha\times \Sigma_\alpha$, where $D_\alpha$ 
 is a ball in the Euclidean space $\R^m$ and 
 $\Sigma_\alpha$ is a topological space.
 \item[(2)] Whenever $U_\alpha\cap U_\beta \neq \emptyset$, the change of coordinates 
 $\varphi_\beta\circ\varphi_\alpha^{-1}$ is of the form
 $$(z,\zeta)\mapsto (\lambda_{\alpha\beta}(z,\zeta), \tau(\zeta)),$$ where $\lambda_{\alpha\beta}$ is smooth in the $z$ variable. If
 the $\lambda_{\alpha\beta}$ preserve a fixed orientation of the m-ball we say that the lamination is oriented.
\end{itemize}

The sets $\varphi_\alpha^{-1}(D_\alpha\times\{\zeta\})$ are called {\em plaques}. Condition (2)
says that the plaques glue together to form smooth manifolds, 
called {\em leaves}, which are ``immersed" in $M$. We will say that the lamination is compact if the total space $M$ is compact. 

When the $\Sigma_\alpha$ are open subsets of $\R^n$ and the $\varphi_\alpha$ are smooth, the space $M$ is a manifold and the laminated
structure $\Lam$ is called a {\em smooth foliation}.

\end{definition}

We say that a function $u\colon M\rightarrow\R$ is laminated of class $C^{k,0}$ and we write $u\in C^{k,0}(M,\Lam)$, or simply $u\in C^{k,0}$ 
if the context is clear, if $u$ is a continuous function on $M$ which when restricted to a leaf is of class $C^k$ in the usual sense and 
all its tangential derivatives up to order $k$ are continuous functions on $M$. Similarly we can also define a laminated map $\psi\colon(M_1,\Lam_1)\rightarrow(M_2,\Lam_2)$, between two laminations, to be of class $C^{k,0}$ if: it is a continuous function from $M_1$ to $M_2$, it  sends leaves to leaves, it is of class $C^k$ restricted to each leaf and its derivatives up to order $k$ in the leaves' direction are continuous.  

Several vector and fiber bundles over $M$ can be defined in a natural way using the fact that the $\lambda_{\alpha\beta}$ are smooth
in the variable $z$. These include the tangent bundle $T\Lam$ to the lamination, tensor bundles, frame bundles, etc. Locally, their fibers
vary continuously in the smooth topology of $D_{\alpha}$ parametrized by the transversal $\Sigma_{\alpha}$ of the laminated structure. In the same manner we can also talk about pullbacks with respect to laminated maps. We refer to ~\cite{Moore} and ~\cite{Candel-Conlon} for all the basic notions on the theory of laminations.

From now on we will consider only laminations of rank 2, i.e. by surfaces, unless otherwise stated.

A {\em laminated Riemannian metric} $g$ is a laminated tensor which restricted to each leaf is a ($C^{\infty}$) Riemannian metric 
 on the leaf. 
Remark that, if
the space $M$ is compact, two Riemannian metrics $g$ and $g'$ induce quasi-isometric metrics on any given leaf. 
This allows us to
speak about {\em hyperbolic} or {\em parabolic} leaves, independently of the metric. More precisely:

\begin{definition} \label{definition:hyperbolic_leaf}
 Let $(M,\Lam)$ be a compact lamination by surfaces, and let $L$ be a leaf of $\Lam$. Choose any laminated 
 Riemannian metric $g$ on $(M,\Lam)$.
 Let $\tilde L$ be the universal cover of $L$, which is a Riemannian manifold when endowed with the pullback of the restriction 
 of $g$ to $L$. For $x\in\tilde L$ and $r>0$, let $A(r)$ be the area of the disk of radius $r$ and centered at $x$. 
 We say that the leaf $L$ is {\em hyperbolic} if $A(r)$ grows exponentially as a function of $r$, or equivalently $\tilde L$ is conformally equivalent to the Poincaré disk. We say that $L$ is {\em parabolic} if it is not a sphere and $A(r)$ has polynomial growth, or equivalently if $\tilde L$ is conformally equivalent to flat $\R^2$. Finally, $L$ is {\em elliptic} if $L$ is diffeomorphic to  a sphere.
\end{definition}

As explained above, this definition does not depend on $g$. It clearly does not depend on the choice of the point $x$ either.

On an \emph{oriented} laminated space a laminated Riemannian metric $g$ determines a conformal structure on every leaf, that is,
it turns every leaf into a Riemann surface. This follows using local isothermal coordinates or the natural almost complex structure
which is compatible with the laminated metric and the orientation. 
A leaf $L$ is hyperbolic in the sense of Definition \ref{definition:hyperbolic_leaf} if and only if it is a hyperbolic
Riemann surface for any choice of $g$. In this case, it can be uniformized by the unit disk.

When all leaves are hyperbolic, the uniformization maps of individual leaves vary continuously from leaf to leaf. 
More precisely, the following {\em Uniformization theorem}
holds (see \cite{Candel}):

\begin{theorem}\label{theorem:uniformization}
 Let $(M,\Lam)$ be a compact lamination by hyperbolic surfaces endowed with a laminated Riemannian metric $g$. Then there is 
 a laminated Riemannian metric $g'$ which is conformally equivalent to $g$ and for which every leaf has constant curvature -1.
\end{theorem}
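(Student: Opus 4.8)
The plan is to implement the normalized Ricci flow (1.1) leafwise with $r=-2$ (so that a fixed point has $R\equiv -1$, equivalently Gaussian curvature $-1$ in the surface case) and to extract $g'$ as the $t\to\infty$ limit of the flow. First I would choose, within the conformal class of $g$, a laminated metric $g_0$ having strictly negative Gaussian curvature at every point; this is possible because the lamination is of hyperbolic type, by a simple pointwise argument (multiply $g$ by a suitable positive laminated conformal factor, using that the leaves admit complete hyperbolic metrics in their conformal class and a compactness argument on $M$). Since $M$ is compact, every leaf with the induced metric $g_0$ is complete and has curvature bounded between two negative constants, uniformly over $M$; this is exactly the hypothesis needed to invoke the Shi and Chen--Zhu short-time existence and uniqueness theorems for the Ricci flow on each (noncompact) leaf.

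The next step is to promote these leafwise solutions to a genuine laminated object. Here I would use the covering tubes $\Phi\colon\Sigma\times D\to M$ constructed in Section \ref{covering_tubes}: pulling back $g_0$ by $\Phi$ gives a continuous family, parametrized by $\Sigma$, of complete metrics of uniformly bounded negative curvature on the disk $D$, and the Ricci flow on this family depends continuously on the transverse parameter by the uniqueness of solutions (two solutions agreeing at $t=0$ on overlapping tubes must agree, so the local solutions glue). This yields short-time existence of (1.1) as a laminated problem, with $g(t)\in C^{\infty,0}$ for small $t$. The conformal class is preserved along the flow, so it suffices to control the conformal factor $u(t)$, which satisfies a scalar parabolic equation $\partial_t u = e^{-u}\Delta_{g_0} u - R_0 e^{-u} + r$ on each leaf; the negativity of the curvature enters through the maximum principle to give, first, preservation of $R(t)<0$, and then two-sided \emph{a priori} bounds $-C_1\le R(t)\le -C_2<0$ uniform in $t$ and in $M$. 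These bounds, together with the bounded-curvature local derivative estimates (Shi-type estimates, applied on the covering tubes), prevent the metrics from degenerating and give long-time existence; this is essentially the content of the curvature estimates developed in Section \ref{Ricci_Flow}.

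Finally, with uniform curvature bounds in hand, the standard argument for the normalized flow on a surface shows exponential convergence of $R(t)\to -1$ and of $g(t)$ to a limit metric $g_\infty$ on each leaf, in $C^\infty$ on compact subsets of each leaf; equivalently the conformal factor $u(t)$ converges. To see that $g' := g_\infty$ is laminated, i.e. $C^{\infty,0}$, I would again work in the covering tubes: the convergence is uniform in the transverse parameter because the estimates are, so $\Phi^*g(t)\to\Phi^*g_\infty$ in the leafwise-$C^\infty$, transversally-continuous topology, and the limit glues to a global laminated metric on $(M,\Lam)$ conformal to $g$ with every leaf of constant curvature $-1$.

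The main obstacle I expect is not the leafwise analysis — that is classical Hamilton--Chow theory on surfaces, made uniform by the negative-curvature pinching — but the \textbf{transverse regularity}: ensuring that all the estimates (the maximum-principle bounds on $R(t)$, the derivative estimates, and the convergence rate as $t\to\infty$) are uniform over the transversal, so that the limit is genuinely transversally continuous together with all its leafwise derivatives. This is precisely where the covering tubes of Section \ref{covering_tubes} do the essential work, converting ``a continuous family of Ricci flows on $D$'' into ``a Ricci flow of laminated metrics,'' and it is the step that requires the most care.
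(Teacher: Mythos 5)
Your overall strategy is the same as the paper's: pass to a conformal laminated metric of everywhere negative curvature, run the normalized leafwise Ricci flow (Shi and Chen--Zhu for leafwise existence and uniqueness), use covering tubes to get transverse regularity of the solution and of its $t\to\infty$ limit, and conclude by Hamilton's convergence argument with the maximum-principle pinching of the curvature between negative constants.

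There is, however, a genuine gap at your very first step. You claim that a laminated metric $g_0$ of strictly negative curvature in the conformal class of $g$ exists ``by a simple pointwise argument'', multiplying $g$ by a conformal factor built from the complete hyperbolic metrics on the leaves together with compactness of $M$. This is circular: on each leaf the hyperbolic metric in the conformal class is $e^{v}g$ for some leafwise function $v$, but the transverse continuity of $v$ and of its leafwise derivatives is exactly what Theorem \ref{theorem:uniformization} asserts, so these leafwise metrics cannot be used to produce a $C^{\infty,0}$ \emph{laminated} conformal factor, and compactness of $M$ does not repair this (a priori one only has semicontinuity of the uniformizing factor). The existence of such a $g_0$ is the nontrivial Theorem \ref{lemma:etienne} of Ghys, proved by a Hahn--Banach duality argument, which the paper quotes with references; note moreover that the covering tubes you invoke are themselves constructed from this negatively curved metric via Hadamard's theorem and the leafwise exponential map, so the step cannot be bypassed. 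A secondary weak point: transverse continuity of the flow does not follow ``by uniqueness of solutions'' alone, since uniqueness only gives consistency on overlaps; one needs an actual continuous-dependence argument. Your uniform Shi-type estimates plus a compactness-and-uniqueness subsequence argument on the tubes would do it, but you do not carry this out; the paper instead solves the scalar parabolic equation for the conformal factor on an exhaustion $\Omega_k\times\Sigma$ of the covering tube, with fundamental solutions and contraction fixed points depending continuously on the transverse parameter, and then identifies the limit by leafwise uniqueness.
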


The objective of this paper is to give a new proof of this theorem using some of the machinery from the Ricci flow.

\section{Laminated Ricci flow}\label{Ricci_Flow}

\subsection{The Ricci flow and the maximum principle}

Let $(M,\Lam)$ be a compact lamination and $g_0$ a laminated metric on it. We can 
consider the ``normalized laminated Ricci flow'' as the evolution of the metric under 
the equation
\begin{equation}\frac{\partial g}{\partial t}=(r-R)g,\end{equation}
with initial condition $g(0)=g_0$;
here $R$ is the scalar curvature of the leaves and $r$ is a constant 
(to be fixed conveniently). Let us denote by $R_0$ the scalar curvature of the leaves 
with respect to the metric $g_0$. Since $M$ is compact the leaves are complete and 
moreover $R_0$, being a 
continuous function on $M$, is bounded. 
From this it is possible to conclude that there exists $\epsilon >0$ such that for each $t$ in a time interval 
$[0,\epsilon)$ there is a solution $g(t)$ to the  Ricci flow equation; for $g(t)$ to be a solution to $(1)$ on $(M,\Lam)$ it 
has to vary continuously in the transverse direction, a fact that is essentially a consequence 
of the continuous dependence of the solution to $(3.1)$ with respect to the initial 
condition. We postpone the proof of these facts to Section 5 and now we establish some {a priori} bounds on the curvature function
that can be deduced from the maximum principle and which are essential for the long time existence of solutions.

It is easy to see that the 
curvature of a family of metrics $g(t)$ satisfying (1) evolves under the 
diffusion-reaction equation ~\cite{Ham2}:
   \begin{equation}  
    \frac{\partial R}{\partial t}=\Delta R+(R-r)R.
   \end{equation}
Here $\Delta$ denotes the Laplacian in the leaf direction 
(with respect to $g(t)$), i.e. we 
consider the above equation on each leaf.

 An important fact in the two dimensional case is that equation 
(3.1) leaves invariant the conformal class of the initial metric $g_0$, hence we 
can write the evolution as an evolution of a single function $u$. More 
precisely, by writing $g=e^ug_0$ for a metric in the conformal class of $g_0$,
 we have that under the Ricci flow $u$ evolves according to
  \begin{equation}
   \frac{\partial u}{\partial t}=r-R=\Delta u-e^{-u}R_0+r=e^{-u}(\Delta_0 u-R_0)+r.
  \end{equation}
  We denote here by $\Delta_0$ the Laplacian associated to $g_0$ and we
 use the well known fact that $\Delta_{e^ug_0}=e^{-u}\Delta_0$.

 Thanks to the compactness of $M$ we can use the maximum principle to 
control the evolution of the geometric quantities under the Ricci flow. We 
state and prove here the versions of the maximum principle that we will use in the sequel. The proofs are the same as 
for compact surfaces since we only need leafwise differentiation (the arguments 
are taken form ~\cite{Chow-Knopf} ), but we have decided to include them for the
reader's convenience.

\begin{proposition} Let $v\colon M\times[0,T]\rightarrow\R$ be a function which is $C^{2,0}$ in $x\in M$ and $C^1$ in $t$ such that
                    $$ \frac{\partial v}{\partial t}\leq\Delta v+\beta v+b,$$
                   where $\beta\colon M\times[0,T]\rightarrow\R$ satisfies $\beta(x,t)\leq-C<0$ for 
                   a constant $C$, and $b$ is a non negative constant. If $v(x,0)\leq0$ for all $x$ in $M$,
                   then $v(x,t)\leq b/C$ for all $x\in M$ $t\in[0,T]$.  
\end{proposition}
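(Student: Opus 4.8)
The plan is to run the classical scalar maximum principle of Chow--Knopf essentially verbatim, exploiting two things only: that $M$ is compact (so every continuous function on $M\times[0,T]$ attains its maximum), and that the operator $\Delta_t$ differentiates along the leaves, so at a maximum point the relevant second-order information lives on a single leaf. The motivating observation is that $b/C$ is the stationary value of the ODE $\phi'=-C\phi+b$, $\phi(0)=0$, whose solution $\phi(t)=\tfrac{b}{C}(1-e^{-Ct})$ increases to, but never reaches, $b/C$; one therefore expects $v(\cdot,t)\le\phi(t)\le b/C$. To make the comparison rigorous at a maximum point, rather than comparing $v$ with $\phi$ itself I would compare it with a slightly raised strict supersolution and then pass to the limit.

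Concretely, for $\epsilon>0$ set $v_\epsilon(x,t)=v(x,t)-\tfrac{b}{C}-\epsilon(1+t)$. Since $v(\cdot,0)\le0$ and $b/C\ge0$ (here $b\ge0$ is used), we get $v_\epsilon(x,0)\le-\epsilon<0$ for all $x$. Substituting $v=v_\epsilon+\tfrac{b}{C}+\epsilon(1+t)$ into the differential inequality for $v$ and using $\Delta_t\big(\tfrac{b}{C}+\epsilon(1+t)\big)=0$ gives
\[
\frac{\partial v_\epsilon}{\partial t}\le \Delta_t v_\epsilon+\beta v_\epsilon+\Big(\tfrac{b}{C}\,\beta+b\Big)+\big(\beta\,\epsilon(1+t)-\epsilon\big).
\]
Because $b\ge0$ and $\beta\le-C$ we have $\tfrac{b}{C}\beta+b\le0$, and because $\beta<0$ and $\epsilon(1+t)>0$ we have $\beta\epsilon(1+t)-\epsilon<0$; hence $v_\epsilon$ satisfies the \emph{strict} inequality $\partial_t v_\epsilon<\Delta_t v_\epsilon+\beta v_\epsilon$ on $M\times[0,T]$.

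Now I would argue by contradiction: suppose $v_\epsilon(x,t)\ge0$ for some $(x,t)$. By compactness of $M\times[0,T]$ and continuity, $v_\epsilon$ attains its maximum at a point $(x_*,t_*)$, and since $v_\epsilon(\cdot,0)<0$ necessarily $t_*>0$. At $(x_*,t_*)$ the function $t\mapsto v_\epsilon(x_*,t)$ has a maximum at $t_*$, so $\partial_t v_\epsilon(x_*,t_*)\ge0$ (equal to $0$ if $t_*<T$); the restriction of $v_\epsilon(\cdot,t_*)$ to the leaf through $x_*$ has an interior maximum at $x_*$, so its leafwise Hessian is nonpositive and $\Delta_{t_*}v_\epsilon(x_*,t_*)\le0$ — this is the only place differentiation enters, and it is purely leafwise, which is exactly why the compact-surface proof applies unchanged; and $v_\epsilon(x_*,t_*)\ge0$ with $\beta<0$ forces $\beta(x_*,t_*)v_\epsilon(x_*,t_*)\le0$. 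Feeding these three facts into the strict inequality yields $0\le\partial_t v_\epsilon(x_*,t_*)<0$, a contradiction. Hence $v_\epsilon<0$ on all of $M\times[0,T]$, i.e. $v(x,t)<\tfrac{b}{C}+\epsilon(1+t)$; letting $\epsilon\to0$ gives $v(x,t)\le b/C$.

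There is no genuine obstacle here; the work is bookkeeping around a standard result. The only points that deserve a word are: the attainment of the maximizer, which is precisely where compactness of $M$ enters (and is why the noncompact Ricci-flow theory of Shi and Chen--Zhu is not needed at this step); the one-sided time derivative at $t_*=T$; and the remark that at an ambient maximum one only ever uses second-order data along one leaf. Alternatively, one could apply Hamilton's trick to $\Phi(t)=\max_{x\in M}e^{Ct}\big(v(x,t)-\tfrac{b}{C}\big)$: this $\Phi$ is locally Lipschitz with $\Phi(0)\le0$, and at a.e. $t$, evaluating at a maximizer $x_t$ gives $\Phi'(t)\le\big(C+\beta(x_t,t)\big)\Phi(t)\le0$ whenever $\Phi(t)\ge0$, so $\Phi\le0$ throughout $[0,T]$ and again $v\le b/C$.
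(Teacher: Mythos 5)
Your proof is correct and is essentially the paper's argument: both are the standard $\varepsilon$-perturbed maximum principle, relying on compactness of $M$ to produce a maximizing point and on the purely leafwise second-derivative test to get $\Delta_t\le 0$ there. The only cosmetic difference is that the paper works with the weighted function $e^{Ct}\left(v-b/C\right)-\varepsilon t-\varepsilon$ and a first-vanishing-time argument, whereas you subtract $b/C+\varepsilon(1+t)$ directly and argue at a global maximum of the perturbed function.
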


\begin{proof}
 Define, for a positive $\varepsilon$, $F\colon M\times[0,T]\rightarrow\R$ by
$$F(x,t)=e^{Ct}(v(x,t)-b/C)-\varepsilon t-\varepsilon.$$
It is enough to prove that $F$ is everywhere negative. Suppose by the contrary
 that $F$ vanishes at some point, then by compactness there will be a first time 
$t_0$ for which $F$ vanishes. Hence there will be a point $x_0\in M$ such that:
 \begin{itemize} 
      \item[$\bullet$] $F(x_0,t_0)=0$, and 
      \item[$\bullet$] $F(x,t)<0$ for all $x\in M$ if $t<t_0$
 \end{itemize}
 This implies that $\frac{\partial F}{\partial t}(x_0,t_0)\geq 0$. But on the 
other hand we have
\begin{align*}
 \frac{\partial F}{\partial t}(x_0,t_0) &\leq\Delta F(x_0,t_0)+\left(\frac{b}{C}e^{Ct_0}+(1+t_0)
 \varepsilon\right)(C+\beta)-\varepsilon\\
                              &\leq\Delta F(x_0,t_0)-\varepsilon,\end{align*}
and since $F(\cdot,t_0)$ has a maximum at $x_0$ when restricted to the leaf through $x_0$ we 
have $\Delta F(x_0,t_0)\leq 0$ and we arrive at a contradiction. 
\end{proof}

\begin{Rem}
 Notice that if we take $b=0$ in the above proposition, $C$ can be 
negative and the same argument shows that we have $v(x,t)\leq0$ for all $x\in M$ $t\in [0,T]$. In other words, if $\beta$ is bounded and we start with a nonpositive initial condition then any subsolution remains nonpositive.
\end{Rem}

\begin{proposition} Let $(M,\Lam)$ be a compact lamination and $g(t)$ be a one parameter family
               of Riemannian metrics on $(M,\Lam)$. Suppose that 
               $v\colon M\times[0,T]\rightarrow\R$ is a function which is $C^{2,0}$ with respect to $M$ 
               and $C^1$ with respect to $t$, and such that
               $$\frac{\partial v}{\partial t}\leq\Delta v+F(v)$$
               where $F\colon\R\rightarrow\R$ is a locally Lipschitz function. 
               Suppose $v(x,t)\leq c$ for all $x \in M$, then 
               $v(x,t)\leq\varphi(t)$ where $\varphi$ is the unique solution
               to the Cauchy problem
               $$\left\{\begin{aligned} \dot\varphi(t)&=F(\varphi(t))\\
                                          \varphi(0)&=c.\end{aligned}\right.$$
\end{proposition}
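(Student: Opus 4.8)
The plan is to deduce this from the previous Proposition — more precisely from the Remark following it — by a standard linearization of $F$: one sets $w=v-\varphi$, checks that $w$ satisfies a linear differential inequality of the type already treated with nonpositive initial datum, and then concludes $w\le 0$ from the leafwise maximum principle. To fix the constants: since $M\times[0,T]$ is compact and $v$ is continuous, $v$ is bounded; and since $\varphi$ solves $\dot\varphi=F(\varphi)$ on $[0,T]$ it is $C^1$, hence bounded there as well. Let $I\subset\R$ be a compact interval containing the ranges of both $v$ and $\varphi$; because $F$ is locally Lipschitz it is $L$-Lipschitz on $I$ for some $L>0$.

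Now put $w(x,t)=v(x,t)-\varphi(t)$. As $\varphi$ is constant along the leaves, $w$ is again $C^{2,0}$ in $x$ and $C^1$ in $t$, and $w(x,0)=v(x,0)-c\le0$. Subtracting $\dot\varphi=F(\varphi)$ from the hypothesis on $v$ yields
\[
\frac{\partial w}{\partial t}\le\Delta_t w+\bigl(F(v)-F(\varphi)\bigr).
\]
Write $F(v)-F(\varphi)=\beta(x,t)\,w(x,t)$, where $\beta$ equals the difference quotient $\bigl(F(v)-F(\varphi)\bigr)/(v-\varphi)$ at points with $v\ne\varphi$ and $\beta=0$ elsewhere; by the Lipschitz bound $|\beta|\le L$ everywhere. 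Thus $\partial_t w\le\Delta_t w+\beta w$ with $\beta$ bounded and $w(\cdot,0)\le0$, which is exactly the situation of the Remark (equivalently, the previous Proposition with $b=0$ and $C=-L$). Hence $w\le0$, i.e.\ $v(x,t)\le\varphi(t)$ for all $x\in M$, $t\in[0,T]$.

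There is no deep obstacle here; the step needing the most attention is checking that the previous Proposition still applies when the coefficient $\beta$ produced by linearizing $F$ is only bounded and measurable rather than continuous. This is in fact fine: the proof of that Proposition evaluates $\beta$ only pointwise at a first contact point and never differentiates it, while compactness of $M$ is precisely what produces that contact point and makes it a leafwise maximum, so that $\Delta_t w\le0$ there. A secondary point is that we have used tacitly that $\varphi$ is defined on all of $[0,T]$, which is implicit in the phrase ``$\varphi$ is the unique solution to the Cauchy problem''. If one prefers to avoid invoking the Remark, one can instead compare $v$ directly with the solution $\varphi_\varepsilon$ of $\dot\varphi_\varepsilon=F(\varphi_\varepsilon)+\varepsilon$, $\varphi_\varepsilon(0)=c+\varepsilon$, establish $v<\varphi_\varepsilon$ by a first-time-of-contact argument structurally identical to the previous proof, and let $\varepsilon\to0$; in that variant the only nontrivial ingredient is the continuous dependence of solutions of ODEs on initial conditions and parameters.
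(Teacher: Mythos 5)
Your proof is correct and takes essentially the same route as the paper: set $w=v-\varphi$, use the local Lipschitz property of $F$ on a compact interval containing the ranges of $v$ and $\varphi$ to absorb $F(v)-F(\varphi)$ into a bounded zero-order coefficient $\beta w$, and conclude $w\le 0$ from Proposition 3.1 with $b=0$ via Remark 3.1. The only (immaterial) difference is your choice of $\beta$ as the difference quotient, where the paper takes $\beta=C\,\text{sign}(w)$; your observation that the discontinuity of $\beta$ is harmless applies equally to the paper's choice.
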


\begin{proof}
 Take $w=v-\varphi$, then
 $$\frac{\del w}{\del t}=\frac{\del v}{\del t}-\dot{\varphi}\leq \Delta v+F(v)-F(\varphi).$$
 Since $F$ is locally Lipschitz, and $M$ is compact there exists a constant $C$ such that 
 $$|F(v)-F(\varphi)|\leq C|v-\varphi|.$$
 Therefore
 $$\frac{\del w}{\del t}\leq \Delta v+C\,\text{sign}(w)w,$$
 and taking $\beta=C\,\text{sign}(w)$, Proposition 3.1 adapted for $b=0$ implies that
 $v\leq\varphi$ (see remark 3.1).
\end{proof}

 We also have, reversing inequalities, the corresponding propositions for supersolutions (in Proposition 3.1 
we must change the sign of the constant $b$).

\subsection{Negative curvature at all points.}\label{negative_curvature}

 If the scalar curvature of $(M,\Lam,g_0)$ is negative at all points we can use the 
 maximum principle to assure long time existence for the Ricci flow on each leaf. Suppose 
$R_0(x)<0$ for all point $x\in M$. By compactness we have $R_{min}\leq R_0(x)
\leq R_{max}<0$ for all $x\in M$. Suppose $R_{min}\neq R_{max}$  and choose 
a constant $r\in(R_{min},R_{max})$. Then,
 using the maximum principle we have the following proposition.

\begin{proposition}\label{proposition:curvature bounds} Let $g(t)$ the solution to the flow
                    $$\frac{\partial g}{\partial t}=(r-R)g,\quad g(0)=g_0$$
                    defined on a time interval $[0,T)$. If $R_0(x)<0$ for all
$x\in M$, then there exist a positive constant $C$ such that
          $$r-Ce^{rt}\leq R(t)\leq r+Ce^{rt},$$
for all $t\in[0,T)$. Moreover for each positive integer $k$, there
  exists a constant $C_k$ such that
  $$|\nabla^k R|^2\leq C_ke^{\frac{r}{2}t}$$
  for all $t\in[0,T)$.
\end{proposition}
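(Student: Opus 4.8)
Both estimates follow from the leafwise maximum principle (Propositions 3.1 and 3.2) combined with ODE comparison, along the lines of the compact-surface arguments of Hamilton~\cite{Ham2} and Chow--Knopf~\cite{Chow-Knopf}. The only new ingredient is that everything here is a leafwise computation and, $M$ being compact, every quantity that appears is continuous — hence bounded — on the slab $M\times[0,T']$ for each $T'<T$, so the comparison functions and the coefficient $\beta$ in Proposition 3.1 can be produced uniformly and one then lets $T'\uparrow T$. Throughout I use (3.2), $\partial_t R=\Delta R+(R-r)R$, and the fact (established in Section 5) that $R(t)$ and its leafwise covariant derivatives are laminated of class $C^{2,0}$, which is what makes the maximum principle applicable on each leaf.

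\textbf{Bounds on $R$.} For the upper bound, let $\bar\varphi$ solve the Riccati equation $\dot{\bar\varphi}=\bar\varphi(\bar\varphi-r)$ with $\bar\varphi(0)=R_{max}$; since $R_{max}\in(r,0)$ the right side is negative there and vanishes at $r$, so $\bar\varphi$ decreases monotonically to $r$, and integrating explicitly (via $\eta=(\bar\varphi-r)^{-1}$, which satisfies the linear equation $\dot\eta=-1-r\eta$) gives $\bar\varphi(t)-r\le C\,e^{rt}$. Now $w:=R-\bar\varphi$ solves the \emph{linear} equation $\partial_t w=\Delta w+(R+\bar\varphi-r)w$, whose zeroth-order coefficient is bounded on $M\times[0,T']$ and whose initial value $R_0-R_{max}$ is $\le 0$; by Remark 3.1 (Proposition 3.1 with $b=0$ and bounded $\beta$) $w\le 0$, so $R\le\bar\varphi$, and letting $T'\uparrow T$ yields $R(t)\le r+Ce^{rt}$. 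The lower bound is symmetric: compare $R$ from below with the solution of the same Riccati equation started at $R_{min}<r$ (which now increases to $r$), using the supersolution form of Proposition 3.1; after enlarging $C$ one gets $r-Ce^{rt}\le R(t)$.

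\textbf{Bounds on the derivatives.} I would induct on $k$. For $k=1$, the Bochner formula, together with the fact that in dimension two $\mathrm{Ric}=\tfrac{R}{2}g$, yields
\[
\frac{\partial}{\partial t}|\nabla R|^2=\Delta|\nabla R|^2-2|\nabla^2R|^2+(4R-3r)|\nabla R|^2 .
\]
Discarding the nonpositive term $-2|\nabla^2R|^2$ and writing $4R-3r=r+4(R-r)$, the already proved bound $|R-r|\le Ce^{rt}$ shows $4R-3r\le r/2$ for all $t\ge t_0$, with $t_0$ finite; on $[0,t_0]$, $|\nabla R|^2$ is bounded by compactness (say by $M_0$), and on $[t_0,T)$ one compares it with the spatially constant supersolution $M_0\,e^{(r/2)(t-t_0)}$ using Proposition 3.1 (Remark 3.1). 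Patching the two intervals gives $|\nabla R|^2\le C_1e^{(r/2)t}$. For the inductive step, the analogous evolution equation (see~\cite{Ham2,Chow-Knopf}) has the shape
\[
\frac{\partial}{\partial t}|\nabla^kR|^2=\Delta|\nabla^kR|^2-2|\nabla^{k+1}R|^2+\beta_k|\nabla^kR|^2+\ell_k ,
\]
where — because in dimension two the Riemann tensor equals $\tfrac{R}{2}(g_{ik}g_{jl}-g_{il}g_{jk})$, so the commutators of $\Delta$ with iterated covariant derivatives only feed back factors of $R$ or $\nabla R$ — the coefficient $\beta_k$ is a universal \emph{linear} function of $R$ (hence $\beta_k\le r/2$ for $t$ large, by Part 1, after enlarging $t_0$), and $\ell_k$ is a finite sum of contractions $\nabla^iR\ast\nabla^jR$ with $1\le i,j$ and $i+j\le k$, so $|\ell_k|\le B\,e^{(r/2)t}$ by the inductive hypothesis (using $|\nabla^iR|\le C_i^{1/2}e^{(r/4)t}$). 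Thus, on $[t_0,T)$, $u:=|\nabla^kR|^2$ satisfies $\partial_t u\le\Delta u+\tfrac{r}{2}u+Be^{(r/2)t}$, and comparison with a supersolution of the form $D\,e^{(r/2)(t-t_0)}$ (together with the bound on $[0,t_0]$) gives $|\nabla^kR|^2\le C_ke^{(r/2)t}$.

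\textbf{Where the difficulty lies.} The bound on $R$ is routine. The substance is the inductive step: one must derive the evolution equation for $|\nabla^kR|^2$ with the correct good term $-2|\nabla^{k+1}R|^2$ and verify that its linear coefficient $\beta_k$ has a negative limit $\le r/2$ — for $k=1$ it is $4R-3r\to r$ — which is precisely where the two-dimensionality is essential, guaranteeing that every curvature term arising from commuting derivatives past $\Delta$ is genuinely \emph{lower order} (a factor of $R$ or $\nabla R$, never an independent piece of curvature), and then absorb the remaining products $\nabla^iR\ast\nabla^jR$ using the inductive exponential decay and interpolation. A subsidiary but genuinely necessary point is that all of this consists of leafwise applications of Propositions 3.1--3.2, hence presupposes the laminated $C^{2,0}$ (indeed $C^{\infty,0}$) regularity of $g(t)$ proved only in Section 5; so in the logical structure of the paper this proposition is established in tandem with the results there.
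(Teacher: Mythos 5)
Your proof is correct and follows essentially the same route as the paper: the bounds on $R$ come from the leafwise maximum principle compared against the explicit Riccati solution of $\dot\varphi=\varphi(\varphi-r)$ (you linearize via $w=R-\bar\varphi$ instead of quoting Proposition 3.2 directly, which is the same argument), and the derivative bounds come from the standard evolution equations for $|\nabla^k R|^2$ with eventually negative zeroth-order coefficient. The only difference is that you sketch this second part, whereas the paper simply cites \cite[Proposition~5.27]{Chow-Knopf} for it.
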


\begin{proof}
 As we have seen, the evolution of the scalar curvature is given by equation (3.2)
 $$\frac{\partial R}{\partial t}=\Delta R+(R-r)R.$$
 Taking $F\colon\R\rightarrow\R$ given by $F(s)=(s-r)s$ we are in the hypothesis to apply
 Proposition 3.2 to bound $R$ above and below. The solution to the Cauchy problem
 $$\left\{\begin{aligned} \dot\varphi(t)&=F(\varphi(t))\\
                                          \varphi(0)&=c\end{aligned}\right.$$
for $r\neq 0$, $c\neq0$ is given by
$$\varphi(t)=\frac{r}{1-(1-\frac{r}{c})e^{rt}},$$
then
$$R\geq \frac{r}{1-(1-\frac{r}{R_{\text{min}}})e^{rt}}\geq r+(R_{\text{min}}-r)e^{rt}$$
and
$$R\leq\frac{r}{1-(1-\frac{r}{R_{\text{max}}})e^{rt}}\leq r+(R_{\text{max}}-r)e^{rt},$$
and the first part of the proposition is proved. The bounds on the derivatives
are also a consequence of the maximum principle and we refer to
~\cite[Proposition~5.27]{Chow-Knopf} for a proof.

\end{proof}
\begin{corollary}
 Under the same hypotheses as in the previous proposition there exists a constant $C$ which depends only on $g_0$ and $r$ such that
 $$\frac{1}{C}g_0\leq g(t)\leq Cg_0,\quad\forall t\in[0,T).$$
\end{corollary}
\begin{proof}
 If we write $g(t)=e^{u}g_0$, it is equivalent to show that $u$ is a bounded function. But $\frac{\partial u}{\partial t}=r-R$, then 
 $$u(x,t)=\int_0^tr-R(x,s)ds,$$
 and applying the bound on the curvature obtained in Proposition 3.3 we have
 $$|u|\leq\int_0^t|r-R|ds\leq \frac{C}{r}(e^{rt}-1)\leq -\frac{C}{r}.$$
 
\end{proof}

 The above proposition together with the bound on the metric
  $$\frac{1}{C}g_0\leq g(t)\leq Cg_0,$$
which is valid as long as the solution exists, implies long time existence of the 
solution on each leaf as in the case of a compact surface 
(see ~\cite[chapters 5 and 6]{Chow-Knopf}). Moreover, the limit metric  $g_{\infty}=
\lim_{t\to\infty}g(t)$ exists for each leaf and has constant scalar curvature $r$ . This can be 
proved using the same arguments that are used in the case of a compact surface, since
they only require differentiation along the leaves. Notice that we have a collection of metrics, one for each leaf, which renders 
each leaf of constant curvature r, the problem is to show regularity as a laminated metric (i.e., continuity of the function $u$ and its derivatives).

 In view of the previous argument it might seem possible to prove
 the uniformization theorem, for a lamination by hyperbolic surfaces, via the Ricci flow.
In that case we need to start with an arbitrary metric $g_0$ whose curvature has possibly varying sign. In this 
situation the 
result is not an immediate consequence of the proof for compact surfaces because it 
needs the Hodge theorem, and we do not have an appropriate laminated version.   
We can overcome this difficulty thanks to an argument attributed to \'Etienne Ghys, that uses the Hahn-Banach
theorem to show 
the existence of a metric of strictly negative curvature in the conformal class
of any given metric on a compact lamination by hyperbolic surfaces. We can therefore
use that result and then apply the Ricci flow to prove the existence of
a uniformizing metric.

\begin{theorem}[Ghys]\label{lemma:etienne}
 Let $(M,\Lam)$ be a compact lamination by surfaces of hyperbolic type. Then, in each conformal class of $(M,\Lam)$, 
 there exists a  Riemannian laminated metric in such a way that the leaves of $\Lam$ have negative curvature at each 
 of its points.
\end{theorem}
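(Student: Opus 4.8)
The plan is to produce a laminated function $u \in C^{\infty,0}(M,\Lam)$ for which the metric $g' := e^{u}g_0$ has everywhere negative leafwise curvature, $g_0$ being any fixed metric in the given conformal class. By the conformal change of curvature formula recorded in Section \ref{Ricci_Flow} (cf. (3.3) and the identity $\Delta_{e^{u}g_0}=e^{-u}\Delta_0$), the leafwise curvature of $e^{u}g_0$ equals $e^{-u}(R_0-\Delta_0 u)$, so it suffices to find a laminated $u$ with $\Delta_0 u > R_0$ at every point of $M$, the Laplacian being taken along the leaves. On each individual leaf such a $u$ exists trivially — for instance the conformal factor of the curvature $-1$ uniformizing metric — but there is no reason for that function to be transversally continuous; producing a genuinely laminated $u$ is the whole point, and it forces a non-constructive, duality-based approach.

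First I would carry out the Hahn--Banach reduction. Consider $V := \{\Delta_0 u : u \in C^{\infty,0}(M,\Lam)\}$, which is a linear subspace of the Banach space $C^0(M)$, and the open convex cone $\mathcal{N} := \{f \in C^0(M) : f(x)<0 \text{ for all } x\in M\}$. A suitable $u$ exists if and only if $R_0 \in V + \mathcal{N}$, which is an open convex subset of $C^0(M)$. Suppose it does not; then by the Hahn--Banach separation theorem there is a nonzero signed Radon measure $\mu$ on $M$ with $\int_M R_0\, d\mu \ge \int_M f\, d\mu$ for every $f \in V + \mathcal{N}$. Testing the inequality against $\mathcal{N}$ forces $\mu \ge 0$ (otherwise the supremum is $+\infty$); testing it against the subspace $V$ forces $\int_M \Delta_0 u\, d\mu = 0$ for every $u \in C^{\infty,0}(M,\Lam)$, after which the separation inequality collapses to $\int_M R_0\, d\mu \ge 0$. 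Normalizing, $\mu$ is thus a probability measure which is \emph{harmonic} for $(M,\Lam,g_0)$ in Garnett's sense, and which has non-negative total curvature.

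The contradiction must come from hyperbolicity. By Garnett's structure theorem a harmonic measure disintegrates, in every flow box, as an integral over the transversal of densities that are positive $g_0$-harmonic functions along the plaques; hence $\mu$-almost every leaf $L$ carries a positive $g_0$-harmonic function $h_L$ with $d\mu|_L = h_L\, d\mathrm{vol}_{g_0}$. For such a leaf, the classical uniformization of the single Riemann surface $L$ supplies a conformal factor $\phi_L$ with $\Delta_0 \phi_L = R_0 + e^{\phi_L}$ on $L$, whence, at least formally, $\int_L R_0\, h_L\, d\mathrm{vol}_{g_0} = \int_L (\Delta_0\phi_L)\, h_L\, d\mathrm{vol}_{g_0} - \int_L e^{\phi_L} h_L\, d\mathrm{vol}_{g_0} = -\int_L e^{\phi_L} h_L\, d\mathrm{vol}_{g_0} < 0$, the middle equality being an integration by parts that uses $\Delta_0 h_L = 0$. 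Integrating over the transversal yields $\int_M R_0\, d\mu < 0$, contradicting $\int_M R_0\, d\mu \ge 0$. Therefore the desired $u$ exists, and $g' := e^{u}g_0$ is the required metric, automatically a laminated metric since $u \in C^{\infty,0}(M,\Lam)$.

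I expect the genuine obstacle to be exactly this last step: the integration by parts takes place on a \emph{non-compact} leaf, so one must control the flux $\int_{\partial B_r}\bigl(\phi_L\,\partial_n h_L - h_L\,\partial_n \phi_L\bigr)$ through large geodesic spheres and show it vanishes along a suitable sequence $r\to\infty$ — equivalently, one must show that on a compact lamination all of whose leaves are hyperbolic no harmonic measure can have non-negative total curvature. This is the analytic heart of the statement; it can be handled by an exhaustion argument exploiting the bounded geometry inherited from compactness of $M$, or extracted from the harmonic-measure analysis in \cite{Candel} and \cite{Alberto}. The remaining ingredients — the conformal change formula, the Hahn--Banach separation, and the Garnett disintegration — are soft.
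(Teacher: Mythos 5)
The paper does not actually prove this theorem: it only cites \cite{Seba1} and \cite[Theorem~6.5]{Seba2}, so there is no internal proof to compare against. Your Hahn--Banach reduction is precisely the argument the paper attributes to Ghys, and that part is correct: the conformal-change formula reduces the problem to finding $u\in C^{\infty,0}$ with $\Delta_0 u>R_0$; the set $V+\mathcal N$ is open and convex; separation produces a nonzero functional which, tested on the cone $\mathcal N$ and on the subspace $V$, is a nonnegative measure $\mu$ annihilating $\Delta_0\bigl(C^{\infty,0}(M,\Lam)\bigr)$ --- a harmonic measure in Garnett's sense --- with $\int_M R_0\,d\mu\ge 0$. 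Up to that point the proposal is sound.

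The gap is the final step, and the specific repair you sketch does not go through. First, Garnett's disintegration is only local: a dense leaf meets a flow box in infinitely many plaques, the local densities assemble into a positive harmonic function on the holonomy cover of the leaf only up to a cocycle, and in any case the identity ``$\int_M f\,d\mu=\int_{\Sigma}\bigl(\int_{L_\zeta}f\,h_{L_\zeta}\,d\mathrm{vol}\bigr)d\nu(\zeta)$'' is false for dense leaves (it counts each leaf infinitely often). Second, even granting a leafwise formula, on a noncompact leaf the measure $h_L\,d\mathrm{vol}$ has infinite mass, the integrals $\int_L R_0h_L$ and $\int_L e^{\phi_L}h_L$ generically diverge, and the Green flux through large geodesic spheres has no reason to vanish (it already fails for a Poisson kernel on the hyperbolic disk), so no exhaustion argument will rescue the integration by parts. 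The completion used in the literature avoids leafwise integration by parts entirely: one first proves that the logarithm $\phi$ of the leafwise Poincar\'e density relative to $g_0$ is uniformly bounded with uniformly bounded leafwise derivatives (Ahlfors--Schwarz for the upper bound; a Brody-reparametrization argument using compactness of $M$ and hyperbolicity of \emph{every} leaf for the lower bound), then extends the identity $\int_M\Delta_0\phi\,d\mu=0$ from $C^{\infty,0}$ to such bounded measurable leafwise-smooth functions via the stationarity of $\mu$ under the leafwise heat semigroup, and concludes $0=\int_M\Delta_0\phi\,d\mu=\int_M R_0\,d\mu+\int_M e^{\phi}\,d\mu>\int_M R_0\,d\mu$, the desired contradiction. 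That a priori bound on $\phi$ is the genuine analytic heart you are missing; and note that you cannot simply quote Candel's uniformization theorem to obtain it, since the paper invokes the present lemma precisely in order to reprove that theorem.
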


For a proof of this theorem see ~\cite{Seba1}, and particularly ~\cite[Theorem~6.5]{Seba2}.

\section{Covering tubes.}\label{covering_tubes}
Following a concept defined by Il'yashenko ~\cite{Ily1,Ily2}, also used by Brunella in 
~\cite{Bru} we define the notion of what we call {\it covering tube}. The basic idea is to obtain a kind
of ``flow box'' which is saturated by leaves. Its existence, in the case of foliated manifolds,  relies on the non existence
of vanishing cycles and can be constructed by gluing together the universal coverings of the leaves based on each of the points of a given transversal. For a lamination by surfaces of hyperbolic type, 
using \ref{lemma:etienne} and the Hadamard theorem, we can show the existence
of covering tubes by taking the exponential map on each leaf.

\begin{definition}\label{definition:coveringtube}
 Let $(M,\Lam)$ be a lamination and 
 $\Sigma$ be a transversal. A {\it covering tube} with respect to $\Sigma$ 
 is a lamination $U_{\Sigma}$ such that:
 \begin{enumerate}
  \item the laminated structure on $U_{\Sigma}$ is given by a continuous fibration $\psi\colon U_{\Sigma}\rightarrow\Sigma$; 
  \item there exists a laminated immersion (local laminated diffeomorphism) $\Phi\colon U_{\Sigma}\rightarrow M$;
  \item there exists a 
  section $\sigma\colon\Sigma\rightarrow U_{\Sigma}$ such that $\Phi\circ\sigma=\text{id}_{\Sigma}$ and  $(\psi^{-1}(\zeta),\sigma(\zeta))$
  is identified with the universal covering of the leaf $L_{\zeta}$ based at $\zeta$ via $\Phi\big|_{\psi^{-1}(\zeta)}$.
 \end{enumerate}

 \begin{lemma}
  If $(M,\Lam)$ is a lamination by surfaces of hyperbolic type and $\Sigma$ is any transversal, then there exists
  a covering tube with respect to $\Sigma$.
 \end{lemma}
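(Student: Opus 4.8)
The plan is to combine Theorem~\ref{lemma:etienne} with the Cartan--Hadamard theorem. By Theorem~\ref{lemma:etienne} we may replace $g$ by a conformally equivalent laminated metric of negative leafwise curvature; then the leafwise exponential maps are universal covering maps, and we may take $U_\Sigma$ to be the total space of the tangent bundle of the leaves restricted to $\Sigma$, with the exponential map playing the role of the projection $\Phi$.

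In detail, fix by Theorem~\ref{lemma:etienne} a laminated metric $g$ on $(M,\Lam)$ all of whose leaves have negative curvature at every point; the conformal class is irrelevant for this construction. Since $M$ is compact every leaf $L$ is complete, so by the Hopf--Rinow theorem $\exp_p\colon T_pL\to L$ is defined on all of $T_pL$ for each $p\in L$; and since the curvature is negative there are no conjugate points along any geodesic, so $\exp_p$ has no critical points and the Cartan--Hadamard theorem shows it is a smooth covering map. As $L$ is a surface, $T_pL\cong\R^2$, and $\exp_p$ realizes $\bigl(T_pL,\exp_p^{*}g\bigr)$ with the marked point $0$ as the Riemannian universal cover of $L$ based at $p$.

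Next, define $U_\Sigma$ to be the total space of the restriction of $T\Lam$ to $\Sigma$, i.e.\ $U_\Sigma=\{(\zeta,v):\zeta\in\Sigma,\ v\in T_\zeta L_\zeta\}$, let $\psi\colon U_\Sigma\to\Sigma$ be the bundle projection, and $\sigma\colon\Sigma\to U_\Sigma$ the zero section. A laminated structure on $U_\Sigma$ is read off from local trivializations of $T\Lam|_\Sigma$: over an open set $W\subset\Sigma$ one has $T\Lam|_W\cong W\times\R^2$, which one covers by charts $B\times W$ with $B\subset\R^2$ a ball, and the changes of coordinates between two such charts have the form $(v,\zeta)\mapsto(A(\zeta)v+c,\,h(\zeta))$, affine in $v$ with continuous coefficients and $h$ a homeomorphism of open subsets of $\Sigma$ — in particular smooth in the ball direction — so the axioms of a lamination hold. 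The leaves of this lamination are precisely the fibers $\psi^{-1}(\zeta)=T_\zeta L_\zeta$, because $\psi$ is continuous, is constant on plaques, and each fiber is connected; equipping $\psi^{-1}(\zeta)$ with $\exp_\zeta^{*}g$ then identifies $\bigl(\psi^{-1}(\zeta),\sigma(\zeta)\bigr)$ with the universal cover of $L_\zeta$ based at $\zeta$, which is condition~(1). Finally define $\Phi\colon U_\Sigma\to M$ by $\Phi(\zeta,v)=\exp_\zeta(v)$: on each fiber this is the universal covering $\exp_\zeta$, hence a leafwise local diffeomorphism, and $\Phi\bigl(\sigma(\zeta)\bigr)=\exp_\zeta(0)=\zeta$, so $\Phi$ restricts to the inclusion $\Sigma\hookrightarrow M$ on the image of $\sigma$. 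Granting that $\Phi$, together with the metric $\exp_\zeta^{*}g$ and the trivializations above, is laminated, condition~(2) follows.

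The one substantial point is this transverse regularity: that $(\zeta,v)\mapsto\exp_\zeta(v)$ and all its leafwise derivatives are continuous on $U_\Sigma$, making $\Phi$ a $C^{\infty,0}$ immersion, $\exp_\zeta^{*}g$ a laminated metric on $U_\Sigma$, and the trivializations genuinely compatible. Here one must cope with the noncompactness of the leaves: a whole leaf cannot be covered by finitely many foliation charts of $(M,\Lam)$, but for fixed $(\zeta_0,v_0)$ the geodesic segment $s\mapsto\exp_{\zeta_0}(sv_0)$, $s\in[0,1]$, is compact and hence is covered by finitely many such charts. In each of these charts the geodesic equation has Christoffel symbols that are smooth along plaques and continuous in the transverse direction, because $g$ is a laminated metric; so the geodesic flow depends continuously on the transverse parameter and smoothly on $v$, and chaining this dependence through the finitely many charts yields continuity of $\Phi$ near $(\zeta_0,v_0)$, while differentiating the flow in $v$ (Jacobi fields) gives continuity of the leafwise derivatives of $\Phi$, of $d\Phi$, and of $\exp_\zeta^{*}g$; the absence of conjugate points makes $d\Phi$ nonsingular along the leaves. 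This iteration through finitely many charts along a compact geodesic segment is the technical crux, and everything else is formal.
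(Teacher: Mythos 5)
Your proposal is correct and follows essentially the same route as the paper: apply Theorem~\ref{lemma:etienne} to get a laminated metric of negative leafwise curvature, invoke Hadamard's theorem so that the leafwise exponential map is a universal covering, and take $U_\Sigma=T\Lam|_\Sigma$ with $\Phi(\zeta,v)=\exp_\zeta(v)$, the bundle projection as $\psi$ and the zero section as $\sigma$. Your final paragraph merely fills in the detail behind the paper's one-line appeal to continuous dependence of geodesics on initial conditions and parameters, which is a welcome but not divergent elaboration.
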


 \begin{proof}
  Since we are assuming that all leaves are of hyperbolic type, by \ref{lemma:etienne} there exists a Riemannian 
  metric on $M$ with respect to which all leaves have negative curvature. Then, thanks to a theorem of Hadamard, 
  the exponential map on each leaf based at any point is a covering map. Take any transversal $\Sigma$ 
  to $\Lam$ and define $U_{\Sigma}:=T\Lam|_{\Sigma}$. Given that geodesics depend continuously on initial conditions and 
  parameters, the map
  $$\Phi\colon U_{\Sigma}\rightarrow M$$
  given by $\Phi(\zeta,v):=\text{exp}_{\zeta}(v)$ is of class $C^{\infty,0}$ and it clearly is a local laminated diffeomorphism. Therefore, 
  the lamination $U_{\Sigma}$ is a covering tube of $(M,\Lam)$; the submersion $\psi$ of the definition is the natural 
  projection $U_{\Sigma}\rightarrow\Sigma$ and the section $\sigma$ is the zero section. 
 \end{proof}

\end{definition}

\section{Existence and uniqueness of solutions.}\label{existence}
Since on each leaf we are running the Ricci flow with an initial condition of bounded geometry, 
by Shi's theorem there exists a solution on each leaf which remains with bounded curvature as long as it 
is defined, and by ~\cite{Chen-Zhu} it is also unique. Moreover, the lifetime of each maximal solution is bounded below by a constant depending only 
on the curvature of the initial metric (this constant is a decresing funtion of the maximum of the absolute value of the initial curvature) and then there exists a positive time for which the solutions on
all leaves are simultaneously defined. Therefore, collecting all that solutions we have a function $u\colon M\times[0,T]\rightarrow\R$ defined for some positive $T$ such that $g(t)=e^{u(\cdot,t)}g_0$ solves the Ricci flow equation on each leaf, starting with the metric $g_0$. This is not necessarily a solution to our problem since $u$ is not necessarily continuous in $M$.
 Hence, to prove the existence
of a solution to our problem we must show that $u_t=u(\cdot,t)$ belongs to $C^{\infty,0}(M,\Lam)$ for each $t$, and this essentially 
consists in proving continuous dependence, in the 
$C^{\infty}$ topology, of the Ricci Flow with respect to parameters. When we look at a flow box $D_{\alpha}\times\Sigma_{\alpha}$ we can think the equation for $u$ as an equation in the disk $D_{\alpha}$ depending on the point in the transversal $\Sigma_{\alpha}$ (parameters). We present a proof which gives 
 continuous dependence by adapting Shi's proof
to our context; and taking advantage of the particular situation of having two dimensional leaves.
For this we use the covering tube
to trivialize the lamination. 

Let us first state the theorem we want to prove.

\begin{theorem}\label{theorem:continuity}
 Let $g_0$ be a laminated Riemannian metric of nonconstant negative curvature on a compact surface lamination $(M,\Lam)$.
 Let $g(t)=e^{u_t}g_0$ be the leafwise solution to the normalized Ricci flow equation (3.1) on $(M,\Lam)$, with a constant $r\in(R_{\text{min}},R_{\text{max}})$
 and initial condition $g_0$. Then the function $u_t=u(\cdot,t)$ belongs to $C^{\infty,0}(M,\Lam)$.
\end{theorem}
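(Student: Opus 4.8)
The strategy is to reduce the laminated regularity statement to a statement about continuous dependence of solutions of a parabolic Cauchy problem on transverse parameters, using the covering tube as a trivialization device. Fix a transversal $\Sigma$ and let $\Phi\colon U_\Sigma\to M$ be the covering tube furnished by Lemma~\ref{lemma:coveringtube}, so that each fiber $\psi^{-1}(\zeta)\cong D$ is the universal cover of the leaf $L_\zeta$. Pulling back, $\tilde g_0:=\Phi^*g_0$ is a laminated metric on $U_\Sigma\cong\Sigma\times D$ whose restriction to each slice $\{\zeta\}\times D$ is a complete metric of bounded negative curvature, conformal to a fixed background; writing $\tilde g_0=e^{w_0(\zeta,\cdot)}g_{\mathrm{euc}}$ (or against the Poincaré metric), the Ricci flow becomes the scalar PDE (3.3) on $D$ with coefficients depending continuously on $\zeta$. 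Because $\Phi$ is a local laminated diffeomorphism and the Ricci flow is local and conformally natural, the pullback $\Phi^*g(t)$ of the leafwise solution $g(t)=e^{u_t}g_0$ solves the same flow on $U_\Sigma$ with initial data $\tilde g_0$; since solutions of the Ricci flow on a complete surface of bounded curvature are unique (Shi \cite{Shi}, Chen--Zhu \cite{Chen-Zhu}), $\Phi^*g(t)$ is \emph{the} solution of the pulled-back flow. Thus it suffices to prove that this solution on the tube, and all its leafwise derivatives, depend continuously on $\zeta\in\Sigma$ — i.e. to prove $C^{\infty,0}$ regularity for $\tilde u_t$ on $U_\Sigma$ — because $u_t=\tilde u_t\circ(\text{local section of }\Phi)$ locally, and continuity downstairs follows from continuity upstairs.

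\textbf{Continuous dependence on the tube.} On $\Sigma\times D$ one runs the Shi-type argument for parabolic equations with parameters. First, the \emph{a priori} bounds of Proposition~\ref{proposition:curvature bounds} give, uniformly in $\zeta$ and locally uniformly in $t$, two-sided bounds $\tfrac1C g_0\le g(t)\le C g_0$ and curvature bounds $r-Ce^{rt}\le R(t)\le r+Ce^{rt}$, together with $|\nabla^k R|^2\le C_k e^{rt/2}$ for every $k$; these pull back to the tube. These estimates are exactly what is needed to bootstrap. The core analytic step is a local (interior) Schauder / Bernstein-type estimate: on a compact sub-ball $D'\Subset D$, the function $\tilde u_t$ and all its spatial derivatives on $D'\times[0,T']$ are bounded by constants depending only on the curvature bounds above and on $C^0$-data, hence uniformly in $\zeta$. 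One then shows that two solutions starting from nearby initial data $w_0(\zeta,\cdot)$ and $w_0(\zeta',\cdot)$ stay close in $C^{k}(D'\times[0,T'])$: subtract the equations, use the uniform bounds to linearize, and apply the maximum principle (Propositions 3.1--3.2, in the laminated/parametrized form already established) together with interior parabolic estimates to propagate the $C^0$-closeness of the data, and of the coefficients of (3.3), into $C^{k}$-closeness of the solutions. Since $\zeta\mapsto w_0(\zeta,\cdot)$ is continuous in $C^\infty_{\mathrm{loc}}$ by definition of a laminated metric and continuity of the exponential map, this yields $\tilde u\in C^{\infty,0}(U_\Sigma)$.

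\textbf{Conclusion and the hard part.} Finally, since $\Phi$ restricts to a diffeomorphism from a neighborhood of $\sigma(\zeta)$ onto a plaque, and $\Sigma$ meets every leaf, one descends: for each $x\in M$ pick $\zeta$ with $x$ in the image of the plaque through $\sigma(\zeta)$, write $u_t$ near $x$ as $\tilde u_t$ composed with the inverse of this local diffeomorphism, and conclude $u_t\in C^{\infty,0}(M,\Lam)$; uniqueness of $g(t)$ makes this independent of choices. The main obstacle is the parametrized interior estimate — getting the Shi/Bernstein bounds on derivatives of $\tilde u_t$ on interior sub-balls \emph{uniformly} in the transverse parameter $\zeta$, so that $C^0$-continuity of the data and coefficients upgrades to $C^\infty$-continuity of the solution; everything else (pullback naturality of the flow, uniqueness, descent) is formal once the covering tube is in hand. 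The leaves being two-dimensional, together with conformality of the flow reducing everything to the single scalar equation (3.3), is what makes these estimates tractable and is exploited throughout.
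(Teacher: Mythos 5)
Your overall architecture coincides with the paper's: pull everything back to a covering tube, reduce the theorem to continuous dependence on the transverse parameter of the scalar conformal equation (5.1), use leafwise uniqueness (Shi, Chen--Zhu) to identify solutions and to patch different tubes, and descend through the local laminated diffeomorphism $\Phi$. The a priori bounds, the identification-by-uniqueness step and the descent are all fine. The gap is in the one step you yourself single out as the main obstacle: the proof of continuous dependence on $\zeta$ over the \emph{noncompact} fibers. Your mechanism is to subtract the equations for two nearby parameters $\zeta,\zeta'$ and apply ``the maximum principle (Propositions 3.1--3.2, in the laminated/parametrized form already established) together with interior parabolic estimates''. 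This does not work as stated. Propositions 3.1 and 3.2 are proved on the compact lamination $M$ (compactness is exactly what produces the first interior maximum point in their proofs), whereas the difference $w=\tilde u(\cdot,\zeta,t)-\tilde u(\cdot,\zeta',t)$ lives on a single noncompact fiber $\R^2$; it is not a function on $M$, and no version of those propositions applicable to it has been established. Moreover the initial closeness you actually have is only $C^\infty_{\mathrm{loc}}$: $w(\cdot,0)$ is small on a compact ball and merely bounded outside it, so restricting to a compact cylinder $\bar D'\times[0,T']$ does not help either, since on the lateral boundary $w$ is only bounded, not small, and the interior Schauder/Bernstein estimates you invoke for $w$ presuppose exactly the $C^0$ smallness of $w$ on a slightly larger cylinder that is missing. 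To close this you would need a genuinely localized stability estimate for the flow on complete surfaces of bounded geometry --- for instance a maximum principle with a spatial barrier or weight of the kind used in the Chen--Zhu uniqueness argument, or cutoff/energy estimates --- and none of this is supplied in the proposal.

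The paper circumvents precisely this difficulty by a different device: for an exhaustion $\Omega_k\times\Sigma$ of the tube it solves the zero-boundary Dirichlet problems for (5.1), where transverse continuity comes for free from the fundamental-solution and fixed-point construction (its Lemma 5.2, following Friedman) and the maximum principle on the bounded domains is unproblematic; the uniform bounds (as in Proposition \ref{proposition:curvature bounds} and the explicit two-sided bound on $v_k$) then allow extraction of a limit in the $C^{k,0}$ topologies on compacta, which is still transversally continuous, and leafwise uniqueness identifies this limit with $\Phi^{\ast}g(t)$ and patches tubes. So either prove the localized comparison estimate your route requires, or adopt the exhaustion-by-Dirichlet-problems scheme; as written, the central analytic step is asserted rather than proved.
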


The strategy to prove the theorem, after taking pullback to a covering tube, is to use well known results about 
parabolic partial differential equations to prove existence and continuous dependence with respect to parameters, as well as Shi's theorem. 

Let $\Phi\colon U_{\Sigma}\rightarrow M$ be a covering tube. Taking pullback with respect to $\Phi$ of the metrics $g(t)$ we have
\begin{align*}
    \Phi^{\ast}g(t)&=\Phi^{\ast}e^{u_t}g_0=e^{u_t\circ\Phi}\Phi^{\ast}g_0;\\
    \Phi^{\ast}R_t &=R_t\circ\Phi;\\
    \intertext{and}
\Phi^{\ast}\Delta_{g(t)} &=\Delta_{\Phi^{\ast}g(t)}=e^{-u_t\circ\Phi}\Delta_{\Phi^{\ast}g_0}.
\end{align*}
Let us put $\tilde{u}\colon U_{\Sigma}\times[0,T]\rightarrow\R$ given by $\tilde{u}(x,t)=u(\Phi(x),t)$,
 $\tilde{\Delta}_0=\Phi^{\ast}\Delta_0$, and $\tilde{R}_0=R_0\circ\Phi$. Then $\tilde{u}$ satisfies the equation
 \begin{equation}
     \left\{\begin{aligned}
      &\frac{\partial\tilde u}{\partial t}-e^{-\tilde u}\tilde{\Delta}_0\tilde{u}=r-\tilde{R}_0e^{-\tilde u},\\
      &\tilde{u}_0=0.
     \end{aligned}\right.
 \end{equation}
Reciprocally, in view of uniqueness of the solutions to the Ricci flow equation, we have that any function which satisfies (5.1) necessarily is the lift of a function $u$ which satisfies (3.3) since the equation is invariant under leafwise deck transformations. 

If we take a covering tube of the form $U_{\Sigma}=\R^2\times\Sigma$, then problem (5.1) can be seen as a family of quasilinear parabolic problems parametrized by $\zeta\in\Sigma$. As was mentioned  before, we know there exists a unique solution on $\R^2$ for each $\zeta$ in $\Sigma$. To show continuity of the family of solutions with respect to $\zeta$ we first consider the problem on a set of the form $\Omega\times\Sigma$ for a bounded open set $\Omega\subset\R^2$ with smooth boundary, and then we let the boundary  go to infinity.

If $\Omega\subset\R^2$  we will denote by  $C^1([0,T],C^{\infty}(\Omega))$ the set of functions $f\colon[0,T]\times \Omega\rightarrow\R$ which are $C^1$ with respect to $t\in[0,T]$ and $f(t,\cdot)\in C^{\infty}(\Omega)$.

We are going to use the following proposition which is a particular case of ~\cite[Theorem 3.2]{Shi}; and which is a consequence of well known results  from the theory of quasilinear parabolic partial differential equations (~\cite [Chapter 6]{Lady}).

\begin{proposition} 
 Let $\Omega\subset\R^2$ be a bounded open subset with smooth  boundary.
 Fix a Riemannian metric on $\R^2$, let $\Delta_0$ be the corresponding Laplacian  and let $R_0$ be the corresponding scalar curvature. Then, there exist $T>0$, which only depends on $\underset{\bar\Omega}{\text{max}}\,|R_0|$, and a unique
 solution $v\in C^1([0,T],C^{\infty}(\bar{\Omega}))$ to 
 the following problem
 \begin{equation*}
 \left\{\begin{aligned}
         &\frac{\partial v}{\partial t}-e^{-v}\Delta_0 v=r-e^{-v}R_0\\
         &v(0,z)=0\quad \forall z\in \Omega\\
         &v(t,z)=0\quad \forall z\in\partial\Omega,
        \end{aligned}\right.,
  \end{equation*}
  where $r$ is any constant.
\end{proposition}

Before we proceed to the proof of Theorem 5.1 let us state the following lemma which is a corollary of the previous proposition.

\begin{lemma}
Let $\Omega\subset\R^2$ be a bounded open subset with smooth boundary, and let $\Sigma$ be a compact topologial space. Suppose that we have a continuous family  of Riemannian metrics on $\R^2$ parametrized by $\Sigma$, and let us denote by $\Delta_0$ and $R_0$ the corresponding Laplacian and scalar curvature with respect to each metric in the family $($both depend on $\zeta\in\Sigma$ of course$)$. Suppose in addition that $R_0$ is negative. Then the function $v\colon[0,T]\times\Omega\times\Sigma\rightarrow\R$, which is defined in some positive time interval $[0,T]$, such that $v(\cdot,\cdot,\zeta)$ is the solution to the problem
\begin{equation}
 \left\{\begin{aligned}
         &\frac{\partial v}{\partial t}-e^{-v}\Delta_0 v=r-R_0e^{-v}\\
         &v(0,z,\zeta)=0\quad \forall z\in \Omega\\
         &v(t,z,\zeta)=0\quad \forall z\in\partial\Omega,
        \end{aligned}\right.
  \end{equation}
 $($where $r$ is an arbitrary constant in the interval $(R_{min},R_{max})$ as in  Section 3 $)$ for $\zeta\in\Sigma$, is continuous.
 \end{lemma}

\begin{proof}
 Since $|R_0(z,\zeta)|$ is continuous by hypothesis, it has a maximum $k_0$ which is then an upper bound for the absolute value of the scalar curvature of each plaque $\Omega\times\{\zeta\}$. This implies that there exists  a positive $T$ such that solutions to (5.2) for all $\zeta$ are simultaneously defined in a time interval $[0,T]$ (for a $T$ that depends only on $k_0$). Therefore, we have a well defined function $v\colon[0,T]\times\Omega\times\Sigma\rightarrow\R$ which solves problem 5.2 for each $\zeta\in\Sigma$. Certainly $v$ is continuous with respect to $t$ and $z$, so all that is left to prove is continuity with respect to $\zeta$.  
 
 Let us first show that $v$ is a bounded function. The maximum principle is valid in each plaque for 5.2, and  analogously as we have proved for a compact lamination we have
 for each $\zeta\in\Sigma$
 $$|r-R(t,z,\zeta)|\leq Ce^{rt},\quad\forall(t,z)\in[0,T]\times\Omega,$$
 for a constant $C$ that does not depend on $\zeta$. Hence, as in Corollary 3.4 we have that $v$ is bounded.
 
 Now, to see that $v$ is continuous with respect to $\zeta$ we are going to use again the maximum principle in each plaque. Lets us take $\hat{\zeta}\in\Sigma$ and $\epsilon>0$. For a function $u(t,z,\zeta)$ define $\hat u(t,z,\zeta):=u(t,z,\hat \zeta)$  and let us take $\omega=(v-\hat v)^2$. Then, using equation 5.2 we have
 $$\frac{\partial\omega}{\partial t}=2(v-\hat v)(e^{-v}\Delta_0v-e^{-\hat v}\widehat{\Delta_0v}+\hat R_0e^{-\hat v}-R_0e^{-v}).$$
 Making some straightforward calculations and using the facts that $\Delta_0$ and $R_0$ are continuous and $v$ is bounded, we deduce that there exists a positive constant $K$ such that for all $\zeta$ close enough to $\hat \zeta$ we have
 $$\frac{\partial\omega}{\partial t}\leq e^{-v}\Delta_0\omega+K\omega+\epsilon.$$
 The maximum principle then implies that $\omega$ is bounded by the solution to the Cauchy problem
 \begin{equation*}
     \left\{\begin{aligned} &\dot\varphi=K\varphi+\epsilon,\\
                            &\varphi(0)=0.
             \end{aligned}
     \right.
 \end{equation*}
 Therefore,  $|v-\hat v|\leq \sqrt{\frac{e^{KT}-1}{K}}\sqrt{\epsilon}$ for all $\zeta$ sufficiently close to $\hat\zeta$, and the lemma is proved.
\end{proof}

\begin{proof}[Proof of Theorem \ref{theorem:continuity}:]
 Since the lamination is hyperbolic we can cover it with covering tubes of the form $U_{\Sigma}\cong\R^2\times\Sigma$. Take an exhaustion of $U_{\Sigma}$
 by open
  sets of the form $\Omega_k\times\Sigma$ with $\Omega_k$ as in Proposition 5.2. Then, for each $\zeta\in\Sigma$ there exists a unique 
  solution $v_k(\cdot,\cdot,\zeta)$ to the problem
\begin{equation*}
 \left\{\begin{aligned}
         &\frac{\partial v}{\partial t}-e^{-v}\tilde{\Delta}_0 v=r-\tilde{R}_0e^{-v}\\
         &v(0,z,\zeta)=0\quad \forall (z,\zeta)\in \Omega_k\times \Sigma\\
         &v(t,z,\zeta)=0\quad \forall (z,\zeta)\in\partial\Omega_k\times\Sigma,
        \end{aligned}\right.
  \end{equation*}
  defined on a positive time interval $[0,T]$ which does not depends on $\zeta$,
  and which belongs to $C^{\infty,0}(\Omega_k\times\Sigma)$ for each $t\in[0,T]$. Applying the maximum principle we have that $v_k$ is uniformly bounded by a 
  constant independent of $k$, in fact we have
  $$\log\left(\frac{R_{max}}{r}\right)\leq v_k(z,\zeta,t)\leq\log\left(\frac{R_{min}}{r}\right),$$
  where $R_{min}$ and $R_{max}$ are as in Section 3.2.
  Moreover, the thesis of Proposition 3.3 holds and the curvature and all its tangential derivatives are uniformly bounded. Therefore, there exists a subsequence
  of $\{v_k\}$ that converges uniformly in any $C^{n,0}-$norm to a function $v\in C^1([0,T],C^{\infty,0}(U_{\Sigma}))$ that satisfies the equation 
  $\frac{\partial v}{\partial t}-e^{-v}\tilde{\Delta}_0 v=r-\tilde{R}_0e^{-v}$ in $U_{\Sigma}$. As was mentioned  before, uniqueness of the Ricci flow equation in each leaf implies that $v$ is the pullback of a function defined on $\Phi(U_{\Sigma})$. Again, by uniqueness, $v$ is necessarily equal to $\Phi^{\ast}(u\big|_{\Phi(U_{\Sigma})})$, then $u\in C^{\infty,0}(M,\Lam)$.
  \end{proof}

\section{Proof of the uniformization theorem.}\label{uniformization}  
  
Now we have all the ingredients to conclude the proof of Theorem 2.1. The argument is the same as
that used by Hamilton in \cite{Ham1,Ham2}.

\begin{proof}[Proof of Theorem \ref{theorem:uniformization}:] Given any conformal class of laminated metrics let $g_0$ be the metric of negative curvature in that class given by Theorem \ref{lemma:etienne}. Let $g(t)=e^{u_t}g_0$ be the solution to the Ricci flow equation 
on the lamination, with initial condition $g_0$, whose existence is assured by Theorem 
\ref{theorem:continuity} as well as its 
$C^{\infty,0}$ regularity. As was mentioned earlier the function $u(x,t)$ is uniformly bounded 
by a constant which is independent of $t$. By virtue of Proposition \ref{proposition:curvature bounds}
the curvature of $g(t)$ converges uniformly to the constant $r$ and all its derivatives converge 
uniformly to zero as $t$ goes to infinity. This, together with the formula
$$u(x,t)=\int_0^t(r-R(x,s))ds,$$
implies that the solution $g(t)$ is defined for all $t\geq0$ and that 
the limit $g_{\infty}=e^{u_{\infty}}g_0=\lim_{t\to\infty}g(t)$ exists and is of class $C^{\infty,0}$. Rescaling the metric we can make the curvature be -1.
\end{proof}

\begin{Rem}
 It would be nice to do without Theorem \ref{lemma:etienne}, and to start the Ricci flow with an arbitrary initial condition having 
 curvature of possibly varying sign. The authors have failed in doing so. The question remains: 
 is it possible for a compact hyperbolic surface lamination to start the flow with an arbitrary initial metric and to prove that the solution converges to the constant curvature metric?
\end{Rem}

\begin{Rem}
While we are not able to prove that there is a trajectory of the Ricci flow from any initial metric $g_0$ to the metric that makes all leaves of constant curvature -1, certainly there is a path which joins these two metrics inside the conformal class of $g_0$. To do that we can first join with a segment the metric $g_0$ to the metric given by Theorem 3.5, and then run the Ricci flow starting at that metric. This proves that the conformal classes are path conected. \end{Rem}
  
\section{Concluding Remarks}\label{Concluding_Remarks}

If all of the leaves of a compact surface lamination $(M,\Lam)$ are of parabolic type, i.e.\ the universal cover of every leaf is conformally equivalent to 
flat $\R^2$ or, equivalently, all the universal covers of the leaves have polynomial growth (and no leaf is diffeomorphic to the 
2-sphere $\mathbb{S}^2$), and we fix the conformal class of a laminated metric $g$ it is not always possible to find a metric in the conformal class of $g$ 
such that every leaf has curvature 0.
\'Etienne Ghys in \cite{Ghys} gave an example of a compact real-analytic 2-dimensional foliation $(M,\Lam)$ with a 
laminated metric $g$ such that
\begin{enumerate}
\item Every leaf is dense and has polynomial growth
\item Every leaf is parabolic
\item It does not exist a $C^{\infty,0}$ function $u:M\to\R$ 
such that with respect to the metric $e^ug$ every leaf is complete and flat.
\end{enumerate}

If all the leaves of a compact lamination are elliptic, then the existence of a uniformizing metric is a direct consequence of Reeb's theorem and Ahlfors-Bers theory (see ~\cite{Ghys}). Since in this case all leaves are diffeomorohic to spheres, Ricci flow equation will also give a proof of the uniformization theorem as a consequence of the validity of the proof for a sphere.

\bibliographystyle{plain}

\end{document}